\documentclass[11pt,leqno]{article}

\usepackage{fullpage}
\usepackage{amssymb}
\usepackage{amsmath}
\usepackage{amsthm}
\textwidth=30cc\baselineskip=10pt

\newtheorem{theorem}{Theorem}

\newtheorem{lemma}{Lemma}

\begin{document}
\title{ On the Products $(1^\ell+1)(2^\ell+1)\cdots (n^\ell +1)$, II\footnote{This work was supported by the National Natural
Science Foundation of China, Grant No. 11371195. }}
\date{}
\author{Yong-Gao Chen\footnote{Corresponding author: ygchen@njnu.edu.cn} and Ming-Liang Gong
\\
\small School of Mathematical Sciences and Institute of
Mathematics, \\ \small Nanjing Normal
University, Nanjing 210023, P. R. CHINA}
 \maketitle
\begin{abstract} In this paper, the following results are proved: (i) For any odd integer $\ell$
 with at most two distinct prime factors and any positive integer $n$,
the product $(1^\ell+1)(2^\ell+1)\cdots (n^\ell +1)$ is not a
powerful number; (ii) For any integer $r\ge 1$, there exists a
positive integer $T_r$ such that, if $\ell$ is a positive odd
integer with at most $r$ distinct prime factors and $n$ is an
integer with $n\ge T_r$, then $(1^\ell+1)(2^\ell+1)\cdots (n^\ell
+1)$ is not a powerful number.
\end{abstract}

{\bf 2010 Mathematics Subject Classifications:} 11A25

{\bf Keywords:} shifted power; product; powerful number; prime.

\section{ Introduction}

A positive integer $a$ is called a \emph{powerful number} if
$p\mid a$ implies $p^2\mid a$ for any prime $p$. Amdeberhan,
Medina and Moll \cite{Amdeberhan2008} conjectured that the product
$(1^2+1)(2^2+1)\cdots (n^2+1)$ is not a square for any integer
$n>3$. Cilleruelo \cite{Cilleruelo2008} confirmed this conjecture.
Fang \cite{Fang2009} confirmed another similar conjecture posed by
Amdeberhan, Medina and Moll \cite{Amdeberhan2008}. Amdeberhan,
Medina and Moll \cite{Amdeberhan2008} also claimed that if $n>12$
and $\ell $ is an odd prime,
 then the product $(1^\ell+1)(2^\ell+1)\cdots (n^\ell +1)$ is not a square. G\" urel and Kisisel \cite{Gurel2010}
 proved that $(1^3+1)(2^3+1)\cdots (n^3 +1)$ is not a powerful
 number. Zudilin, in his MathSciNet review No. MR2569849, outlined a
 proof of the claim for any prime $\ell \ge 5$.
Zhang and Wang \cite{ZhangWang} proved the claim for any prime
$\ell \ge 5$. Recently, the authors and Ren \cite{ChenGongRen}
proved that (i) For any odd prime power $\ell$ and any positive
integer $n$, the product $(1^\ell+1)(2^\ell+1)\cdots (n^\ell +1)$
is not a powerful number;
 (ii) For any  positive odd number $\ell$, there exists an integer $N_\ell$ such that for any positive integer $n\ge N_\ell$,
the product $(1^\ell+1)(2^\ell+1)\cdots (n^\ell +1)$ is not a
powerful number.

For any positive integer $\ell$, let
$$\Omega_l(n)=(1^\ell+1)(2^\ell+1)\cdots (n^\ell +1).$$


In this paper, the following results are proved.

\begin{theorem}\label{thm1} For any positive odd integer $\ell$
 with at most two distinct prime factors and any positive integer $n$,
 $\Omega_\ell(n)$ is not a powerful number. \end{theorem}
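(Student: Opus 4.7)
The plan is to exhibit, for every $(\ell,n)$ satisfying the hypotheses, a prime $P$ with $\nu_P(\Omega_\ell(n))$ odd (in fact equal to $1$); this at once contradicts $\Omega_\ell(n)$ being powerful. The principal tool is the lifting--the--exponent lemma (LTE): for an odd prime $P$ and odd $\ell$, if $P\mid k+1$ and $P\nmid k$, then $\nu_P(k^\ell+1)=\nu_P(k+1)+\nu_P(\ell)$.

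Write $\ell=p_1^{a_1}p_2^{a_2}$ with $p_1,p_2$ distinct odd primes. The prime-power case (one of the $a_i$ being zero) is already settled in \cite{ChenGongRen}, so I may take $a_1,a_2\ge 1$. By Bertrand's postulate pick a prime $P$ with $(n+1)/2<P\le n+1$: then $k=P-1$ is the unique $k\in\{1,\dots,n\}$ with $k\equiv -1\pmod P$. If I can further arrange $\gcd(\ell,P-1)=1$ (equivalently $P\not\equiv 1\pmod{p_i}$ for $i=1,2$) and $P\notin\{p_1,p_2\}$, then because $(\mathbb{Z}/P)^\times$ is cyclic of order $P-1$, the congruence $k^\ell\equiv -1\pmod P$ has exactly $\gcd(\ell,P-1)=1$ solution, namely $k\equiv -1$; hence no other $k\le n$ contributes, and LTE gives $\nu_P((P-1)^\ell+1)=1+\nu_P(\ell)=1$, whence $\nu_P(\Omega_\ell(n))=1$.

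The crux is to guarantee such a prime $P$ for every $n$. By conclusion (ii) of \cite{ChenGongRen} I may restrict to $n<N_\ell$; within that range, short-interval density estimates for primes in arithmetic progressions furnish a good $P$ whenever $n$ is not too small relative to $p_1p_2$. The main obstacle is the residual range in which every prime of $((n+1)/2,n+1]$ is $p_1$, $p_2$, or $\equiv 1\pmod{p_i}$ for some $i$. For these I would fall back on a secondary prime extracted from the cyclotomic factorization $k^\ell+1=\prod_{d\mid\ell}\Phi_{2d}(k)$: choose $k\le n$ and a proper divisor $d\mid\ell$ so that, by Zsygmondy's theorem, $\Phi_{2d}(k)$ admits a primitive prime divisor $q$ with $q>n$ and $q\nmid\ell/d$; then LTE applied to $(k^d)^{\ell/d}+1$ gives $\nu_q(k^\ell+1)=\nu_q(\Phi_{2d}(k))$, which is $1$ for a primitive $q$, while $q>n$ ensures $q$ divides no other $k'^\ell+1$, so $\nu_q(\Omega_\ell(n))=1$. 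The most delicate step will be showing uniformly that a usable secondary $q$ exists for every exceptional pair $(\ell,n)$, in particular handling the few very small $n$ where both the main construction and the Zsygmondy fallback are tight.
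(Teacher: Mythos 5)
Your local mechanism is sound and is in fact the same as the paper's: a prime $P$ with $(n+1)/2<P\le n+1$, $P\nmid\ell$ and $\gcd(P-1,\ell)=1$ occurs to the first power in $\Omega_\ell(n)$ (the paper packages this as Lemmas \ref{lem1} and \ref{lem2}, via the factorization $\Omega_\ell(n)=(n+1)!\prod_a (a^\ell+1)/(a+1)$ rather than LTE, but the content is identical), and the paper's Section 3 carries out exactly your density argument for $n\ge 2\times 10^{10}$, using Ramar\'e--Rumely when $q_1q_2\le 72$ and Montgomery--Vaughan plus Dusart when $q_1q_2>72$. The gap is in how you cover everything else. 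Invoking $n<N_\ell$ from \cite{ChenGongRen} buys you nothing: $N_\ell$ is ineffective and depends on $\ell$, so the ``residual range'' is not a finite set of pairs $(\ell,n)$ --- for each $n$ below your density threshold you must still handle infinitely many $\ell$. The paper's way out, which your plan lacks, is the pigeonhole of Lemmas \ref{lem4}--\ref{lem6}: if $P(n)$ contains three primes whose shifted values $p_i-1$ pairwise share no common odd prime factor, then a powerful $\Omega_\ell(n)$ forces $\ell$ to have at least three distinct prime factors, disposing of all two-prime-factor $\ell$ at once; explicit chains of such prime triples then cover $22\le n\le 2\times 10^{10}$.

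Second, the Zsygmondy fallback as stated does not work. Zsygmondy gives a primitive prime divisor $q$ of $k^{2d}-1$, but guarantees neither $q>n$ nor $\nu_q(\Phi_{2d}(k))=1$ (a primitive divisor can occur to a higher power). Worse, $q>n$ does not ensure that $q$ divides no other $k'^{\ell}+1$: the congruence $x^\ell\equiv -1\pmod q$ has $\gcd(\ell,q-1)$ solutions, and since a primitive divisor of $k^{2d}-1$ satisfies $q\equiv 1\pmod{2d}$, there are at least $d$ such residues, several of which may lie below $n$. This is precisely why the paper's Lemma \ref{lem7} carries the hypothesis $p\nmid b^{(\ell,p-1)}+1$ for all $b\ne a$, verified by explicit computation for each of the finitely many remaining $n$ (namely $3\le n\le 21$) and each of the finitely many pairs $\{q_1,q_2\}$ surviving Lemma \ref{lem2}. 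Your proposal would need to supply these verifications and, more importantly, replace the unworkable ``for every exceptional pair'' step by something finite; as written it is not a proof.
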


\begin{theorem}\label{thm2}For any integer $r\ge 1$, there exists a positive integer $T_r$
such that, if $\ell$ is a positive odd integer with at most $r$
distinct prime factors and $n$ is an integer with $n\ge T_r$, then
$\Omega_\ell (n)$ is not a powerful number.
\end{theorem}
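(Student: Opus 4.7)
The case $r \leq 2$ is already handled by Theorem~\ref{thm1}, which establishes the stronger assertion that $\Omega_\ell(n)$ is never a powerful number; one may simply take $T_1 = T_2 = 1$. Henceforth assume $r \geq 3$.

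The plan is to exhibit, for every $n$ sufficiently large in terms of $r$ and every positive odd $\ell$ with at most $r$ distinct prime divisors, a prime $p$ in the interval $\bigl((n+1)/2,\,n+1\bigr]$ satisfying both $p \nmid \ell$ and $\gcd(p-1,\ell)=1$. Granted such a $p$, the hypothesis $\gcd(p-1,\ell)=1$ makes $x \mapsto x^\ell$ a bijection of $(\mathbb{Z}/p\mathbb{Z})^{\times}$, so the congruence $x^\ell \equiv -1 \pmod{p}$ has the unique solution $x \equiv -1 \pmod{p}$. Because $p > (n+1)/2$, the only $k \in \{1, 2, \ldots, n\}$ with $p \mid k^\ell + 1$ is $k = p - 1$. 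Since $\ell$ is odd, the Lifting-the-Exponent Lemma applied to $(p-1)^\ell + 1^\ell$ yields
$$v_p\bigl((p-1)^\ell + 1\bigr) \;=\; v_p\bigl((p-1)+1\bigr) + v_p(\ell) \;=\; 1,$$
so $v_p(\Omega_\ell(n)) = 1$ is odd, whence $\Omega_\ell(n)$ is not a powerful number.

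The heart of the argument is therefore the construction of the prime $p$. Let $q_1, \ldots, q_k$ (with $k \leq r$) be the distinct prime divisors of $\ell$. Since $p - 1 \leq n$, a prime $q_i$ can divide $p - 1$ only when $q_i \leq n$, so it suffices to impose $p \not\equiv 0, 1 \pmod{q_i}$ for every such $q_i$. Heuristically, these sieving conditions retain a proportion of primes bounded below by the positive constant
$$c_r := \prod_{j=1}^{r}\Bigl(1 - \frac{1}{p_j - 1}\Bigr),$$
where $p_1 < p_2 < \cdots$ are the odd primes (the worst case corresponding to $\ell = p_1 p_2 \cdots p_r$). To make this rigorous uniformly in $\ell$, I would split the $q_i$ into a ``small'' subset with $q_i \leq (\log n)^{A}$, treated by the Siegel--Walfisz theorem together with inclusion-exclusion, and a ``large'' subset with $q_i > (\log n)^{A}$, whose combined contribution to the excluded primes is bounded crudely by $\sum_{q_i > (\log n)^A}\bigl(n/q_i + 1\bigr) \leq r\,n/(\log n)^{A} + r$, negligible compared to $\pi(n+1) - \pi((n+1)/2) \sim n/(2\log n)$.

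The principal obstacle is precisely this uniformity in $\ell$: because $\prod_i q_i$ may exceed any fixed power of $\log n$, Siegel--Walfisz is not directly applicable to the full modulus. The small/large dichotomy described above is what reduces the problem to a bounded-modulus sieve that delivers the uniform positive lower bound $c_r$, and hence a threshold $T_r$ depending only on $r$.
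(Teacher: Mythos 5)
Your argument is correct, and its skeleton coincides with the paper's: both proofs reduce the theorem to producing, for all $n$ large in terms of $r$ alone, a prime $p$ with $(n+1)/2<p\le n+1$ and $(p(p-1),\ell)=1$, and both then conclude that such a $p$ divides $\Omega_\ell(n)$ exactly to the first power (the paper packages this step as Lemmas \ref{lem1} and \ref{lem2} via an order argument; your bijection-plus-lifting-the-exponent derivation is an equivalent reformulation, and your $r\le 2$ shortcut through Theorem \ref{thm1} is harmless since that theorem is proved independently). Where you genuinely diverge is in the sieve that produces $p$. You split the prime divisors $q_i$ of $\ell$ at the threshold $(\log n)^A$, discard the large ones by the trivial bound $n/q_i+1$, and treat the small ones by inclusion--exclusion with Siegel--Walfisz, whose range $k\le(\log x)^B$ accommodates the modulus $\prod q_i\le(\log n)^{rA}$; this is clean and standard, but the uniformity over moduli growing with $n$ (and varying with $\ell$) makes the resulting $T_r$ ineffective in an essential way. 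The paper instead builds the finite tower $D_1=16r+1$, $D_{i+1}=16r\cdot D_i!!+1$ and argues on the first index $j$ with $q_j\ge D_j$: the large $q_i$ are controlled by the Montgomery--Vaughan inequality (Lemma \ref{lema}), while \emph{all} possible small ones are absorbed simultaneously by restricting to primes $p\equiv 2\pmod{D_{j-1}!!}$, a single fixed modulus depending only on $r$ that is divisible by every candidate small $q_i$; thus only Dirichlet's theorem for the finitely many fixed moduli $D_1!!,\dots,D_r!!$ is invoked (Lemma \ref{lemd}), and the sole source of ineffectivity is the threshold $M(k)$ for those finitely many moduli. Your route buys brevity and familiarity; the paper's buys a weaker analytic input and a threshold $T_r$ that is, at least in principle, closer to being computable.
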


\section{General Lemmas}

In this section, we present some lemmas which can be applied to
any odd integer $\ell$.

Let $\cal P$ be the set of all positive primes. For a positive
integer $n$, let
$$P(n)=\{ p : p\in {\cal P}, \frac{n+1}2<p\le n+1\} $$
and
$$P(n;k,1)=\{ p : p\in P(n), p\equiv 1\pmod k\} .$$
For a prime $p$ and an integer $a$, $p^k\| a$ means that $p^k\mid
a$ and $p^{k+1}\nmid a$. For any real number $x$ and two coprime
positive integers $k$ and $\ell$, let $\pi (x)$ denote the number
of primes not exceeding $x$ and $\pi (x; k, \ell)$ denote the
number of primes not exceeding $x$ and congruent to $\ell$ modulo
$k$, and let
$$\vartheta (x; k, \ell) =\sum_{\substack{p\equiv \ell\hskip -3mm\pmod k\\ p\le x}}\log
p,$$ where $p$ denotes a prime. Let $\varphi (k)$ denote Euler's
totient function.

\begin{lemma}\label{lem1} Let $p$ be an odd prime and $a,\ell$ be two positive integers with $2\nmid \ell$. If
$$p\Big| \frac{a^\ell+1}{a+1},$$
then $(p(p-1), \ell )>1$. \end{lemma}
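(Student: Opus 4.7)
The plan is to work with the multiplicative order of $a$ modulo $p$. First I note that $p\nmid a$: otherwise $a^\ell+1\equiv 1\pmod p$, contradicting $p\mid(a^\ell+1)/(a+1)$. So let $d=\operatorname{ord}_p(a)$. From $a^\ell\equiv -1\pmod p$ we get $a^{2\ell}\equiv 1\pmod p$, and the fact that $p$ is odd rules out $a^\ell\equiv 1\pmod p$. Hence $d\mid 2\ell$ but $d\nmid\ell$, and since $\ell$ is odd this forces $d=2m$ for some $m\mid\ell$.

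Next I split into two cases according to the size of $m$. If $m>1$, then on one hand $m\mid\ell$, and on the other hand $2m=d\mid p-1$ so $m\mid p-1$; hence $\gcd(p-1,\ell)\ge m>1$, which already gives $\gcd(p(p-1),\ell)>1$ as required. This case is straightforward.

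The subtler case is $m=1$, i.e.\ $d=2$, where the argument above breaks down because we only learn $a\equiv -1\pmod p$ (the option $a\equiv 1$ is killed by $\ell$ odd combined with $a^\ell\equiv -1$). Here I must exploit the full hypothesis that $p$ divides the \emph{quotient} $(a^\ell+1)/(a+1)$, not merely $a^\ell+1$. The plan is to reduce the identity
\[
\frac{a^\ell+1}{a+1}=\sum_{i=0}^{\ell-1}(-1)^i a^{\ell-1-i}
\]
modulo $p$ using $a\equiv -1\pmod p$. Each summand becomes $(-1)^i(-1)^{\ell-1-i}=(-1)^{\ell-1}=1$ since $\ell-1$ is even, so the sum collapses to $\ell\pmod p$. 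Thus $p\mid\ell$, whence $\gcd(p(p-1),\ell)\ge p>1$.

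The main obstacle I expect is precisely this degenerate case $a\equiv -1\pmod p$: the order-of-$a$ argument alone cannot detect it, and one has to evaluate the cyclotomic-type quotient directly to extract the divisibility $p\mid\ell$. Combining the two cases finishes the proof.
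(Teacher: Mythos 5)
Your proof is correct and uses the same two ingredients as the paper: the multiplicative order of $a$ modulo $p$ dividing $2\ell$ and $p-1$, and the evaluation of the alternating sum $(a^\ell+1)/(a+1)\equiv \ell\pmod p$ when $a\equiv -1$. The only difference is organizational — you argue directly with a case split on the order, while the paper assumes $(p(p-1),\ell)=1$ and derives a contradiction — so this is essentially the paper's proof.
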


\begin{proof} Suppose that $(p(p-1), \ell )=1$. Then $(2\ell,p-1)=2$. By $p\mid a^\ell +1$,
we have $p\mid a^{2\ell}-1$ and $p \nmid a-1$. It follows from
Fermat's little theorem that $p\mid a^{p-1}-1$. Let $d$ be the
order of $a\pmod p$. Then $d\mid 2\ell$ and $d\mid p-1$ (see
\cite[Theorem 88]{Hardy}). So $d\mid (2\ell, p-1)$. Thus $p\mid
a^{(2\ell, p-1)}-1$. So $p\mid a^2-1$. Since $p\nmid a-1$, it
follows that $p\mid a+1$. Thus
$$0\equiv \frac{a^\ell +1}{a+1}=a^{\ell -1}-a^{\ell -2}+\cdots -a+1\equiv
\ell\pmod p,$$ a contradiction with $(p(p-1), \ell )=1$.
Therefore, $(p(p-1), \ell )>1$.
\end{proof}

\begin{lemma}\label{lem2} Let $n$ and $\ell$ be two positive integers with $2\nmid \ell$ such that $\Omega_\ell(n)$ is a powerful number. If
$p\in P(n)$,  then $(p(p-1), \ell)>1$. \end{lemma}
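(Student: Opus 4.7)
The strategy is to argue by contradiction: assume $p \in P(n)$ satisfies $(p(p-1),\ell) = 1$, and derive that $p \,\|\, \Omega_\ell(n)$, contradicting the powerful hypothesis.

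First, I would use Lemma~\ref{lem1} (or rather, the order-theoretic argument inside its proof) to control $v_p(a^\ell+1)$ for every $a \in \{1,\dots,n\}$. Under $(p(p-1),\ell) = 1$ with $p$ odd, that argument shows $p \mid a^\ell + 1 \Rightarrow p \mid a+1$. Moreover, as in the last display of the proof of Lemma~\ref{lem1}, whenever $p \mid a+1$,
\[
\frac{a^\ell+1}{a+1} = a^{\ell-1} - a^{\ell-2} + \cdots - a + 1 \equiv \ell \pmod p,
\]
and $p \nmid \ell$ gives $v_p\bigl((a^\ell+1)/(a+1)\bigr) = 0$. Hence $v_p(a^\ell+1) = v_p(a+1)$ for every such $a$.

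Next, I would exploit the definition of $P(n)$: from $(n+1)/2 < p \le n+1$ follows $2p - 1 > n$, so the only $a \in \{1,\dots,n\}$ with $a \equiv -1 \pmod p$ is $a = p - 1$ (and $p - 1 \ge 1$ since $p \ge 2$, $p - 1 \le n$ since $p \le n+1$). Combining this with the previous step yields
\[
v_p\bigl(\Omega_\ell(n)\bigr) = v_p\bigl((p-1)^\ell + 1\bigr) = v_p(p) = 1,
\]
which contradicts $\Omega_\ell(n)$ being powerful.

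The sole wrinkle is $p = 2$, which is excluded from Lemma~\ref{lem1}. But $2 \in P(n)$ forces $n \le 2$, and then a direct check (or LTE for $p = 2$ with odd exponent) gives $v_2(\Omega_\ell(n)) = v_2(2) = 1$, so the same contradiction. I do not anticipate any real obstacle here: once Lemma~\ref{lem1} is in hand, the essential content of Lemma~\ref{lem2} is that the range of $P(n)$ isolates the single factor $(p-1)^\ell + 1$ of $\Omega_\ell(n)$, pinning $v_p(\Omega_\ell(n))$ to exactly $1$.
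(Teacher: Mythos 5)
Your proof is correct and is essentially the paper's argument run in the contrapositive: the paper writes $\Omega_\ell(n)=(n+1)!\prod_{a=1}^n\frac{a^\ell+1}{a+1}$, notes $p\,\|\,(n+1)!$ for $p\in P(n)$, deduces that some cofactor $\frac{a^\ell+1}{a+1}$ is divisible by $p$, and invokes Lemma~\ref{lem1}, which is exactly your valuation computation $v_p(\Omega_\ell(n))=v_p((p-1)^\ell+1)=1$ under the assumption $(p(p-1),\ell)=1$. Your explicit treatment of $p=2$ is a small point the paper leaves implicit, but the substance is the same.
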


\begin{proof} Since $p\in P(n)$, it follows that $p\mid (n+1)!$ and $p^2\nmid (n+1)!$. Noting that
$$\Omega_\ell(n)=(n+1)! \prod_{a=1}^n \frac{a^\ell+1}{a+1}$$ is a powerful number, we see that there exists $1\le a\le n$ such that
$$p\Big| \frac{a^\ell+1}{a+1}.$$ Now Lemma \ref{lem2} follows from Lemma \ref{lem1}. \end{proof}

\begin{lemma}\label{lem4}Let $\ell$ be a positive odd number and $n$ a positive integer with $n\ge 3$.
 Suppose that there exist $k$ distinct primes $p_1, p_2, \dots , p_k\in P(n)$
 such that any two of $p_1-1, p_2-1, \dots , p_k-1$ have no common odd prime factors.
If $\Omega_\ell(n)$ is a powerful number, then $\ell$ has at least
$k$ distinct prime factors.\end{lemma}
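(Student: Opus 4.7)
The plan is to apply Lemma \ref{lem2} to each of the primes $p_1,\dots,p_k$ and use the hypothesis on the $p_i-1$ together with the size constraint defining $P(n)$ to extract $k$ distinct odd prime divisors of $\ell$.

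First, note that since $n\ge 3$, every $p\in P(n)$ satisfies $p>(n+1)/2\ge 2$, so each $p_i$ is an odd prime and $p_i-1$ is even. Applying Lemma \ref{lem2} to each $p_i$ yields $(p_i(p_i-1),\ell)>1$. Since $\ell$ is odd, the common divisor must involve an odd prime $q_i$. Because $p_i$ itself is odd, there are only two possibilities: either $q_i=p_i$, or $q_i$ is an odd prime factor of $p_i-1$. In either case $q_i\mid\ell$, so it suffices to show that $q_1,\dots,q_k$ are pairwise distinct.

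For this I would argue by contradiction, assuming $q_i=q_j=:q$ for some $i\ne j$ and splitting into three subcases. If $q=p_i$ and $q=p_j$, then $p_i=p_j$, contradicting distinctness. If $q$ divides both $p_i-1$ and $p_j-1$, then $q$ is a common odd prime factor of $p_i-1$ and $p_j-1$, directly contradicting the hypothesis. The remaining (mixed) case is $q=p_i$ and $q\mid p_j-1$; here $p_i\mid p_j-1$, while the defining inequalities of $P(n)$ give $p_j\le n+1<2p_i$, so $p_j-1$ is a positive multiple of $p_i$ strictly less than $2p_i$, forcing $p_j-1=p_i$, i.e.\ $p_j=p_i+1$. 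But $p_i$ and $p_j$ are both odd, so this is impossible. Hence the $q_i$ are $k$ distinct primes dividing $\ell$, finishing the proof.

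The step I expect to require the most care is the mixed case $q=p_i\mid p_j-1$: it is the reason one needs the primes to come from $P(n)$ rather than from an arbitrary collection, since the strict inequality $p>(n+1)/2$ is exactly what rules out $p_i$ being a nontrivial divisor of $p_j-1$ for another prime $p_j\le n+1$. The other two subcases follow immediately from the distinctness of the $p_i$ and the no-common-odd-prime-factor hypothesis on the $p_i-1$.
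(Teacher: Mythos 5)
Your proof is correct and follows essentially the same route as the paper: the paper likewise applies Lemma \ref{lem2} to each $p_i$ and observes that $(p_i,p_j-1)=1$ (your ``mixed case'') so that the numbers $p_i(p_i-1)$ pairwise share no odd prime factor, forcing $\ell$ to have $k$ distinct odd prime divisors. Your write-up merely spells out in more detail the case analysis that the paper compresses into one line.
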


\begin{proof}Since $(n+1)/2<p_i, p_j\le n+1$, we see that $(p_i, p_j-1)=1$ for all $1\le i,j\le k$.
Thus any two of $p_1(p_1-1), p_2(p_2-1), \dots , p_k(p_k-1)$ have no
common odd prime factors. By Lemma \ref{lem2}, we have $(p_i(p_i-1),
\ell)>1$ for all $1\le i\le k$. Since $\ell$ is odd, it follows that
$\ell$ has at least $k$ distinct prime factors.
\end{proof}

\begin{lemma}\label{lem5}Let $\ell$ be a positive odd number with at most $t$ distinct prime factors.
 Suppose that there exist $t+1$ primes $p_1, p_2, \dots ,
 p_{t+1}$ with $3\le p_1<\cdots <p_{t+1}$
 such that any two of $p_1-1, p_2-1, \dots , p_{t+1}-1$ have no common odd prime factors.
Then, for all integers $n$ with $p_{t+1}-1\le n\le
 2p_{1}-2$, $\Omega_\ell(n)$ is not a powerful number.\end{lemma}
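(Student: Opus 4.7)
\textbf{Proof proposal for Lemma \ref{lem5}.}

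The plan is to reduce directly to Lemma \ref{lem4} with $k=t+1$. Assume for contradiction that $\Omega_\ell(n)$ is a powerful number for some $n$ in the stated range $p_{t+1}-1\le n\le 2p_1-2$. The key observation is that the hypothesis on $n$ is tailored precisely so that all of the primes $p_1,\dots,p_{t+1}$ lie in the set $P(n)$.

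First I would verify this containment. The upper bound $p_i\le n+1$ holds because $p_i\le p_{t+1}\le n+1$, where the second inequality uses $n\ge p_{t+1}-1$. The lower bound $p_i>(n+1)/2$ holds because $p_i\ge p_1\ge (2p_1-2)/2+1 > n/2+1/2=(n+1)/2$, using $n\le 2p_1-2$. Hence $p_1,\dots,p_{t+1}\in P(n)$. I would also note in passing that $n\ge 3$ (so that Lemma \ref{lem4} applies): since $p_1\ge 3$ and $p_{t+1}>p_1$, we have $p_{t+1}\ge 5$, so $n\ge p_{t+1}-1\ge 4$.

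Having placed $t+1$ distinct primes in $P(n)$ whose shifts $p_i-1$ are pairwise coprime on the odd part (the exact hypothesis of Lemma \ref{lem4}), I invoke Lemma \ref{lem4} with $k=t+1$. This forces $\ell$ to have at least $t+1$ distinct prime factors, contradicting the standing assumption that $\ell$ has at most $t$ distinct prime factors. Therefore $\Omega_\ell(n)$ cannot be powerful.

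There is no real obstacle here; the lemma is a packaging statement that converts the range condition $p_{t+1}-1\le n\le 2p_1-2$ into the membership $p_i\in P(n)$ required by Lemma \ref{lem4}. The only minor point to be careful about is checking $n\ge 3$ so that Lemma \ref{lem4} is legitimately applicable, and this is automatic from $p_1\ge 3<p_{t+1}$.
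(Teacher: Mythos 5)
Your proposal is correct and matches the paper's proof exactly: both reduce to Lemma \ref{lem4} by observing that the range condition on $n$ forces $p_1,\dots,p_{t+1}\in P(n)$, yielding a contradiction with $\ell$ having at most $t$ prime factors. Your explicit verification of the containment and of $n\ge 3$ is slightly more detailed than the paper's one-line treatment, but it is the same argument.
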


\begin{proof}For all integers $n$ with $p_{t+1}-1\le n\le
 2p_{1}-2$, we have $p_1, p_2, \dots ,
 p_{t+1}\in P(n)$. If $\Omega_\ell(n)$ is a powerful number, then, by Lemma \ref{lem4}, $\ell$ has at least
$t+1$ distinct prime factors, a contradiction.
\end{proof}

The following two lemmas are used to deal with small values of
$n$.

\begin{lemma}\label{lem6}Let $\ell$ be a positive odd number with at most $t$ distinct prime factors.
 Suppose that there exist $(t+1)-$tuples $(p_{i,1}, p_{i,2}, \dots ,
 p_{i,t+1})$ of primes
 with $3\le p_{i,1}<p_{i,2}<\dots <p_{i,t+1}(1\le i\le s)$ and
 $p_{i+1,t+1}<2p_{i,1} (1\le i\le s-1)$
 such that, for each $i$, any two of $p_{i,1}-1, p_{i,2}-1, \dots , p_{i,t+1}-1$ have no common odd prime
 factors. Then, for all integers $n$ with $p_{1,t+1}-1\le n\le
 2p_{s,1}-2$, $\Omega_\ell(n)$ is not a powerful number.\end{lemma}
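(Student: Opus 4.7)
The plan is to apply Lemma~\ref{lem5} to each of the $s$ tuples separately and then show the $s$ resulting ``bad-$n$'' intervals cover the integer range $[\,p_{1,t+1}-1,\,2p_{s,1}-2\,]$.

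For each fixed $i\in\{1,\ldots,s\}$, the tuple $(p_{i,1},\ldots,p_{i,t+1})$ satisfies precisely the hypotheses of Lemma~\ref{lem5}---it consists of $t+1$ primes at least $3$, with the predecessors $p_{i,j}-1$ pairwise sharing no common odd prime factor. Lemma~\ref{lem5} therefore yields that $\Omega_\ell(n)$ is not a powerful number for every integer $n$ in
\[
I_i \;:=\; [\,p_{i,t+1}-1,\;2p_{i,1}-2\,].
\]

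The remaining task is to verify $\bigcup_{i=1}^{s}I_i\supseteq[\,p_{1,t+1}-1,\,2p_{s,1}-2\,]$. Given an integer $n$ in that range, I will choose $i^{*}$ to be the largest index with $n\ge p_{i^{*},t+1}-1$; this set is nonempty because $i=1$ always qualifies. If $i^{*}=s$, then $p_{s,t+1}-1\le n\le 2p_{s,1}-2$ places $n$ in $I_s$. If $i^{*}<s$, then the maximality of $i^{*}$ gives $n<p_{i^{*}+1,t+1}-1$, and combining this with the hypothesis $p_{i^{*}+1,t+1}<2p_{i^{*},1}$ (and the fact that all quantities are integers) yields $n\le 2p_{i^{*},1}-2$, so $n\in I_{i^{*}}$. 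Either way $n$ is bad, which finishes the proof.

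There is essentially no substantive obstacle; the lemma is just a clean covering argument layered on top of Lemma~\ref{lem5}. The only point requiring care is the integer rounding step that turns the strict inequality $p_{i+1,t+1}<2p_{i,1}$ into the weak chain-overlap inequality $p_{i+1,t+1}-1\le 2p_{i,1}-2$, which is exactly what makes the intervals $I_i$ and $I_{i+1}$ mesh together.
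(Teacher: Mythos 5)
Your proposal is correct and follows essentially the same route as the paper: apply Lemma~\ref{lem5} to each tuple to get the intervals $[p_{i,t+1}-1,\,2p_{i,1}-2]$ and use the integrality of $p_{i+1,t+1}<2p_{i,1}$ to see that consecutive intervals overlap and cover $[p_{1,t+1}-1,\,2p_{s,1}-2]$. Your version merely spells out the covering step (via the maximal index $i^{*}$) a bit more explicitly than the paper does.
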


\begin{proof} Given $i$ with $1\le i\le s-1$. By Lemma \ref{lem5}, for all integers $n$ with $p_{i,t+1}-1\le n\le
 2p_{i,1}-2$, $\Omega_\ell(n)$ is not a powerful number. Since $p_{i+1,t+1}<2p_{i,1} (1\le i\le
 s-1)$, it follows that $p_{i+1,t+1}-1\le 2p_{i,1}-2 (1\le i\le
 s-1)$. Therefore, for all integers $n$ with $p_{1,t+1}-1\le n\le
 2p_{s,1}-2$, $\Omega_\ell(n)$ is not a powerful number.
\end{proof}

\begin{lemma}\label{lem7} Let $\ell$ be a positive odd integer, $p$ an odd prime and $n$ a positive integer with
$p^2\mid \Omega_\ell(n)$. Suppose that there exists $d_1\mid \ell$
with $p\| a^{d_1}+1$ for some $2\le a\le n$ and $p \nmid
b^{(\ell,p-1)}+1$ for all $2\le b\le n$ with $b\not= a$. Then
$p\mid \ell$.
\end{lemma}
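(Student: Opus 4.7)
The proof plan is to argue by contradiction, assuming $p\nmid \ell$, and then isolate a single index $k=a$ which carries all of the $p$-adic mass of $\Omega_\ell(n)$, after which a lifting-the-exponent calculation forces $p\mid \ell$.

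First I would establish the following auxiliary fact: for any integer $k$ coprime to $p$, if $p\mid k^\ell+1$ then $p\mid k^{(\ell,p-1)}+1$. To prove it, set $m=(\ell,p-1)$, write $\ell=m\ell_1$ and $p-1=ms$ with $(\ell_1,s)=1$, and let $y=k^m$. Then $y^{\ell_1}\equiv -1\pmod p$ and, by Fermat, $y^s\equiv 1\pmod p$. Using B\'ezout's identity $u\ell_1+vs=1$ we obtain $y\equiv (-1)^u\pmod p$; the option $y\equiv 1$ would contradict $y^{\ell_1}\equiv -1$, so $y\equiv -1$ and $p\mid k^m+1$.

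Next I would apply the hypothesis to pin down which factors of $\Omega_\ell(n)$ are divisible by $p$. The factor for $k=1$ equals $2$ and is not divisible by the odd prime $p$. For $2\le b\le n$ with $b\ne a$, the given $p\nmid b^{(\ell,p-1)}+1$ combined with the contrapositive of the auxiliary fact yields $p\nmid b^\ell+1$. Hence the entire $p$-adic valuation of $\Omega_\ell(n)$ comes from $k=a$, so the assumption $p^2\mid \Omega_\ell(n)$ upgrades to $v_p(a^\ell+1)\ge 2$.

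Finally I would invoke the Lifting the Exponent Lemma. Since $d_1\mid \ell$ and $\ell$ is odd, the quotient $\ell/d_1$ is an odd positive integer; moreover $p\nmid a$ (otherwise $p\mid a^{d_1}+1$ would give $p\mid 1$). Applying LTE with $x=a^{d_1}$, $y=1$, exponent $\ell/d_1$ gives
\[
v_p(a^\ell+1)=v_p(a^{d_1}+1)+v_p(\ell/d_1)=1+v_p(\ell/d_1),
\]
using the hypothesis $p\|a^{d_1}+1$. Combined with $v_p(a^\ell+1)\ge 2$, this forces $p\mid \ell/d_1$, hence $p\mid \ell$, contradicting the assumption.

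The main obstacle, or at least the step that demands the most care, is the auxiliary fact in the first paragraph: the entire argument rests on replacing the exponent $\ell$ by $(\ell,p-1)$ in order to feed the hypothesis, and one must handle the B\'ezout manipulation and the parity check on $u$ correctly. Once that bridge is built, the LTE computation runs without incident.
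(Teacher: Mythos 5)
Your proof is correct and follows essentially the same route as the paper's: you reduce the exponent from $\ell$ to $(\ell,p-1)$ to show that $a$ is the unique index contributing the $p$-adic valuation (the paper does this via the order-divides-gcd argument, you via B\'ezout, which is the same fact), and then extract $p\mid\ell$ from $p^2\mid a^\ell+1$ and $p\,\|\,a^{d_1}+1$ (the paper writes out the telescoping quotient $\frac{a^{d_1\ell_1}+1}{a^{d_1}+1}\equiv\ell_1\pmod p$, which is exactly the content of your LTE step). No gaps.
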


\begin{proof} Suppose that $c$ is an integer with $2\le c\le n$
such that $p|c^\ell +1$. By Fermat's little theorem, we have
$p\mid c^{p-1}-1$. Hence $p\mid c^{(2\ell, p-1)}-1$. That is,
$p\mid c^{2(\ell, p-1)}-1$. By $p|c^\ell +1$ and $p\ge 3$, we have
$p\nmid c^{(\ell, p-1)}-1$. So $p\mid c^{(\ell, p-1)}+1$. By the
condition, we have $c=a$. Since $p^2\mid \Omega_\ell(n)$, it
follows that $p^2\mid a^\ell +1$. Let $\ell =d_1\ell_1$. By $p\|
a^{d_1}+1$, we have
$$p\mid \frac{a^{d_1\ell_1}+1}{a^{d_1}+1}.$$
Since
$$\frac{a^{d_1\ell_1}+1}{a^{d_1}+1}=a^{d_1(\ell_1-1)}-a^{d_1(\ell_1-2)}+\cdots
-a^{d_1}+1\equiv \ell_1 \pmod p,$$ it follows that $p\mid \ell_1$.
Therefore, $p\mid \ell$.
\end{proof}

In order to deal with large values of $n$, we need the following
lemmas.

\begin{lemma}(see \cite{MontgomeryVaughan})\label{MontgomeryVaughan} Let $x$ and $y$ be positive real numbers,
and let $k$ and $\ell$ be relatively prime positive integers. Then
$$\pi (x+y; k,\ell)-\pi (x; k,\ell)\le \frac{2y}{\varphi (k) \log (y/k)}$$
provided only that $y>k$.
\end{lemma}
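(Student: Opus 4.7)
The statement is the Brun–Titchmarsh inequality in the sharp form established by Montgomery and Vaughan via the arithmetic large sieve, so the plan is to reproduce their argument.

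First I would reduce the counting problem to a sieving one. Let $A$ consist of those integers $n \in (x, x+y]$ with $n \equiv \ell \pmod{k}$ which are coprime to every prime $p \le \sqrt{y}$ not dividing $k$. Every prime counted by $\pi(x+y;k,\ell) - \pi(x;k,\ell)$ that exceeds $\sqrt{y}$ belongs to $A$, and the finitely many smaller primes contribute at most $O(\sqrt{y}/\log y)$, which is absorbed into the final estimate under the hypothesis $y > k$.

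Next I would apply the Montgomery–Vaughan form of the arithmetic large sieve: if a subset $A$ of $\mathbb{Z} \cap (x, x+y]$ omits $\omega(p)$ residue classes modulo each prime $p$, then for every $Q \ge 1$,
$$|A| \le \frac{y + Q^2}{\displaystyle \sum_{q \le Q} \mu^2(q) \prod_{p \mid q} \frac{\omega(p)}{p - \omega(p)}}.$$
For the present $A$ one takes $\omega(p) = 1$ at primes $p \nmid k$, encoding coprimality to small primes by forbidding the class $0 \pmod p$, and incorporates the arithmetic progression condition by restricting the sum to $q$ with $(q,k)=1$ and picking up the factor $\varphi(k)/k$.

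Finally, I would bound the denominator from below using the standard estimate
$$\sum_{\substack{q \le Q \\ (q,k)=1}} \frac{\mu^2(q)}{\varphi(q)} \ge \frac{\varphi(k)}{k} \log Q,$$
and choose $Q = \sqrt{y}$ so as to balance $y$ and $Q^2$ in the numerator. This yields the stated inequality with the clean constant $2$ and denominator $\varphi(k) \log(y/k)$. The main obstacle is obtaining precisely this constant $2$ with no logarithmic loss: a direct Selberg upper-bound sieve produces a strictly weaker constant, and it is the optimal additive term $y + Q^2$ in the Montgomery–Vaughan large sieve, together with the sharp lower bound for the truncated singular series, that yields the refined form needed here.
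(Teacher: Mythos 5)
The paper does not prove this lemma at all: it is the Brun--Titchmarsh theorem in the sharp Montgomery--Vaughan form, imported verbatim from \cite{MontgomeryVaughan} as a black box, so the only ``proof'' expected here is the citation. You have correctly identified the result and the broad strategy of its original proof, but your sketch has a genuine gap precisely at the point you flag as the ``main obstacle.'' The plain arithmetic large sieve with numerator $y+Q^2$, combined with the standard lower bound $\sum_{q\le Q,\,(q,k)=1}\mu^2(q)/\varphi(q)\ge \frac{\varphi(k)}{k}\log Q$ and the choice $Q^2\asymp y/k$, gives a numerator of $2y/k$ against a denominator of $\frac{\varphi(k)}{2k}\log(y/k)$, i.e.\ the bound $\frac{4y}{\varphi(k)\log(y/k)}$ --- constant $4$, not $2$, because halving the logarithm at $Q=\sqrt{y/k}$ costs a factor of $2$ on top of the factor $2$ from $y+Q^2=2y/k$. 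The constant $2$ is exactly the hard content of Montgomery and Vaughan's Theorem 2: it requires their refined large sieve with individual spacings, in which the additive term $Q^2$ is replaced by the $q$-dependent quantity $\tfrac32 qQ$ (so that small moduli are penalized less), followed by a considerably more delicate optimization of $Q$ and of the resulting weighted sum over $q$. Your closing sentence attributes the sharp constant to the $y+Q^2$ form plus the singular-series bound, which is not sufficient.

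A secondary issue: the lemma as stated is a clean inequality valid for every $y>k$ with no error term, so your step ``the finitely many smaller primes contribute at most $O(\sqrt{y}/\log y)$, which is absorbed'' cannot be left as an $O(\cdot)$; in a bound where the constant $2$ is the whole point, the primes $p\le Q$ must be accounted for exactly (Montgomery and Vaughan do this by counting them among the unsifted integers rather than discarding them). For the purposes of this paper, none of this machinery is needed --- citing \cite{MontgomeryVaughan} is the intended and sufficient justification.
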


\begin{lemma}(see \cite{Dusart1999})\label{Dusart1999}  For $x\ge 599$, we have
$$\frac x{\log x} \left( 1+\frac{0.992}{\log x}\right) \le \pi
(x)\le \frac x{\log x} \left( 1+\frac{1.2762}{\log x}\right).$$
\end{lemma}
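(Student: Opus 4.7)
The plan is to reduce explicit bounds on $\pi(x)$ to explicit bounds on the Chebyshev function $\vartheta(x) = \sum_{p\le x}\log p$, then apply an effective form of the Prime Number Theorem together with direct numerical verification on an initial range. This is the Rosser--Schoenfeld--Dusart strategy for explicit estimates in prime number theory, and the Dusart constants $0.992$ and $1.2762$ only appear after a careful optimization of every intermediate error term.

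First, I would use Abel summation to write
$$\pi(x) = \frac{\vartheta(x)}{\log x} + \int_2^x \frac{\vartheta(t)}{t\log^2 t}\, dt + C_0,$$
so that an effective estimate of the shape $\vartheta(x) = x + O(x/\log^k x)$, plugged in, expands as $\pi(x) = \frac{x}{\log x}\left(1 + \frac{1}{\log x} + O\!\left(\frac{1}{\log^2 x}\right)\right)$. The target constants $0.992$ and $1.2762$ then arise by tracking how the error terms in $\vartheta$ interact with the integral, so one needs fully explicit (not just asymptotic) bounds on $\vartheta(x)-x$.

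Second, to get that effective control on $\vartheta(x)-x$, I would invoke the truncated Riemann--von Mangoldt explicit formula
$$\psi(x) - x = -\sum_{|\operatorname{Im} \rho| \le T} \frac{x^\rho}{\rho} + O\!\left(\frac{x\log^2(xT)}{T}\right),$$
combine it with an explicit Vinogradov--Korobov (or de la Vall\'ee Poussin) zero-free region for $\zeta(s)$, and use the numerically verified fact that the Riemann Hypothesis holds up to some large explicit height $H_0$. Optimizing the parameter $T$ against $x$ yields an explicit majorant $|\vartheta(x)-x| \le \eta(x)\cdot x$ valid for all $x$ beyond some threshold $X_0$, after which one passes from $\psi$ to $\vartheta$ using the elementary bound $\psi(x)-\vartheta(x) = O(\sqrt{x})$.

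Third, the inequalities must be checked directly for $599 \le x \le X_0$ against tables of $\pi(x)$. The main obstacle is not any single conceptual step but the delicate bookkeeping: simultaneously tuning the height $T$ in the explicit formula, the verified zero-checking height $H_0$, and the implicit constants at each passage so that the quoted coefficients $0.992$ and $1.2762$ come out as tight as claimed. This calibration, carried through with care by Dusart, is what turns the qualitative $\pi(x)\sim x/\log x$ into the sharp two-term explicit enclosure used in Lemma~\ref{Dusart1999}.
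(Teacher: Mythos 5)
This lemma is not proved in the paper at all: it is quoted verbatim from Dusart's 1999 paper, and the intended ``proof'' is the citation itself. So the relevant question is whether your sketch would actually establish the stated inequalities, and it would not. What you have written is an accurate description of the Rosser--Schoenfeld--Dusart \emph{methodology} --- partial summation relating $\pi$ to $\vartheta$, the truncated explicit formula combined with a zero-free region and a numerically verified height for the Riemann Hypothesis, and a finite check on an initial segment --- but every quantitative step is deferred. You never produce an explicit majorant for $|\vartheta(x)-x|/x$, never determine the crossover point $X_0$, and never carry the error through the integral $\int_2^x \vartheta(t)/(t\log^2 t)\,dt$ to extract the second-order coefficient. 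Since the entire content of the lemma is the specific pair of constants $0.992$ and $1.2762$ and the threshold $x\ge 599$ (the asymptotic expansion $\pi(x)=\frac{x}{\log x}\left(1+\frac{1}{\log x}+O\left(\frac{1}{\log^2 x}\right)\right)$ is classical and uncontroversial), a proof that defers all computation to ``careful bookkeeping'' establishes nothing beyond what was already known qualitatively.

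If you intended to supply a genuine proof rather than a citation, you would need at minimum: (i) an explicit inequality of the form $|\vartheta(x)-x|\le \varepsilon_k\, x/\log^k x$ with stated constants and stated range of validity, which in Dusart's treatment rests on the Rosser--Schoenfeld analysis of the low-lying zeros of $\zeta$ together with the enlarged verification height available by 1999; (ii) the explicit propagation of that inequality through the partial-summation identity, which is where the asymmetry between the lower constant $0.992$ and the upper constant $1.2762$ actually arises; and (iii) the finite computation verifying the inequalities on the initial range. In the context of the present paper none of this is expected --- the citation suffices, exactly as with Lemmas \ref{MontgomeryVaughan}, \ref{Dusart12002} and \ref{RamareRumely} --- but as a standalone argument your proposal is an outline of where a proof would come from, not a proof.
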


\begin{lemma} (see \cite{Dusart2002})\label{Dusart12002}   For $x\ge 151$, we have
$$ \pi (x; 3, \ell )> \frac x{2\log x}.$$
\end{lemma}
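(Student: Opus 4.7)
The plan is to deduce this from an explicit Prime Number Theorem for the arithmetic progression modulo $3$, supplemented by a direct numerical check for small $x$. First I would use Dirichlet orthogonality with the unique non-principal character $\chi$ modulo $3$ to write
$$\pi(x;3,\ell)=\tfrac{1}{2}\pi(x)+\tfrac{1}{2}\chi(\ell)\sum_{p\le x}\chi(p)+O(1),$$
so that controlling $\pi(x;3,\ell)$ reduces to a good lower bound on $\pi(x)$ (which Lemma \ref{Dusart1999} already supplies) and an effective upper bound on the character sum $\sum_{p\le x}\chi(p)$.

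To estimate the character sum I would go through $\psi(x,\chi)=\sum_{n\le x}\chi(n)\Lambda(n)$ and the explicit formula. Using an explicit zero-free region for $L(s,\chi)$, together with a numerical verification of the Riemann Hypothesis for this single $L$-function up to some finite height (both available in the literature in the style of Rosser–Schoenfeld and Ramaré–Rumely), gives a bound of the form $|\psi(x,\chi)|\ll\sqrt{x}\exp(-c\sqrt{\log x})$ with explicit constants. Partial summation converts this into $\bigl|\sum_{p\le x}\chi(p)\bigr|\le C x/(\log x)^{2}$ for $x$ exceeding an explicit threshold $X_{0}$. Substituting, together with Lemma \ref{Dusart1999}, into the orthogonality identity yields $\pi(x;3,\ell)>x/(2\log x)$ for all $x\ge X_{0}$. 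For $151\le x<X_{0}$ I would verify the inequality by tabulating the primes up to $X_{0}$ in each residue class modulo $3$; this is a finite computation once $X_{0}$ is fixed.

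The main obstacle is quantitative. All of the analytic ingredients are classical, but to keep the threshold $X_{0}$ small enough that the numerical verification is actually feasible down to $x=151$, one must carefully tune the explicit constants in the zero-free region, in the bound on $|\psi(x,\chi)|$, and in the partial summation step. This calibration — pushing the numerical height up to which the Riemann Hypothesis for $L(s,\chi)$ has been verified, and sharpening the Rosser–Schoenfeld–Dusart-style error terms — is the delicate part of the work carried out in \cite{Dusart2002}; by contrast, the overall analytic structure outlined above is entirely standard.
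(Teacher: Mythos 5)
The first thing to note is that the paper does not prove this statement at all: Lemma \ref{Dusart12002} is quoted verbatim from \cite{Dusart2002} and used as a black box. So your proposal is not an alternative to the paper's argument; it is an attempt to reconstruct the proof that lives in the cited reference. As a structural roadmap it is the standard one (orthogonality over the two characters modulo $3$, an explicit estimate for $\psi(x,\chi)$ for the non-principal character, partial summation, and a finite verification below the analytic threshold), and that is indeed how Ramar\'e--Rumely and Dusart proceed. But as written it has a concrete error and leaves out precisely the content that makes the lemma true down to $x\ge 151$.

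The error: you claim that a zero-free region plus a finite verification of RH for $L(s,\chi)$ yields $|\psi(x,\chi)|\ll\sqrt{x}\exp(-c\sqrt{\log x})$. No such bound is available; even the full Riemann Hypothesis for $L(s,\chi)$ only gives $|\psi(x,\chi)|\ll\sqrt{x}(\log x)^{2}$, and unconditionally a zero-free region gives $|\psi(x,\chi)|\ll x\exp(-c\sqrt{\log x})$, with the $\sqrt{x}$ savings confined to the zeros inside the numerically verified rectangle. This matters for the rest of your argument: from $x\exp(-c\sqrt{\log x})$ with realistic explicit constants, the threshold $X_{0}$ beyond which you could conclude $\bigl|\sum_{p\le x}\chi(p)\bigr|\le Cx/(\log x)^{2}$ with a useful $C$ is astronomically large, far beyond anything one could close by tabulating primes down to $x=151$. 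The feasible route (the one actually taken in \cite{RamareRumely} and \cite{Dusart2002}) is to settle for a bound of the form $|\vartheta(x;3,\ell)-x/2|\le\varepsilon x$ with a small explicit $\varepsilon$ valid for $x\ge 10^{10}$, and then perform the (large but doable) computation on $[151,10^{10}]$. Since your proposal neither supplies the explicit constants nor fixes $X_{0}$, and the one quantitative claim it does make is too strong, it cannot substitute for the citation; the lemma should remain attributed to \cite{Dusart2002}.
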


\begin{lemma} (see \cite{RamareRumely}) \label{RamareRumely}  Let $k$ be an integer with $1\le k\le 72$. For any $x\ge 10^{10}$ and
any $\ell$ prime to $k$, we have
$$\left| \vartheta (x; k, \ell )- \frac x{\varphi (k)} \right| \le
0.023269 \frac x{\varphi (k)}.$$
\end{lemma}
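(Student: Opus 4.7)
The plan is to attack this through the standard explicit analytic machinery for primes in arithmetic progressions. First I would reduce from $\vartheta(x;k,\ell)$ to the Chebyshev-type sum $\psi(x;k,\ell)=\sum_{n\le x,\,n\equiv\ell\pmod k}\Lambda(n)$, since the difference comes only from prime powers $p^m$ with $m\ge 2$ and is $O(\sqrt{x}\log^2 x)$ with an explicit constant, which for $x\ge 10^{10}$ is well below the target error $0.023269\,x/\varphi(k)$. I would then use orthogonality of Dirichlet characters modulo $k$ to write
$$\psi(x;k,\ell)=\frac{1}{\varphi(k)}\sum_{\chi\bmod k}\bar\chi(\ell)\,\psi(x,\chi),$$
where $\psi(x,\chi)=\sum_{n\le x}\chi(n)\Lambda(n)$, thereby isolating the principal-character contribution (which is essentially $x/\varphi(k)$) from the non-principal ones.

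Next I would apply a truncated Perron-type explicit formula: for a suitable height $T=T(k)$,
$$\psi(x,\chi)=E_\chi\, x-\sum_{|\gamma|\le T}\frac{x^\rho}{\rho}+R_\chi(x,T),$$
with $E_\chi=1$ when $\chi=\chi_0$ and $0$ otherwise, $\rho=\beta+i\gamma$ ranging over non-trivial zeros of $L(s,\chi)$, and $R_\chi(x,T)$ a fully explicit remainder. The zero sum would then be controlled in two regimes. For $|\gamma|\le T$ I would use a certified numerical verification of the generalized Riemann hypothesis for each character modulo each $k\le 72$, so that $\beta=1/2$ and the contribution is bounded by $\sqrt{x}\sum_{|\gamma|\le T}1/|\rho|$. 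For $|\gamma|>T$ I would use an explicit Rosser--Schoenfeld--McCurley-type zero-free region $\beta<1-1/(R\log(k|\gamma|))$ together with an explicit density estimate for the number of zeros of $L(s,\chi)$ in dyadic intervals, producing a fully explicit bound on the tail.

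The main obstacle is obtaining the precise constant $0.023269$ uniformly for $1\le k\le 72$ and all $x\ge 10^{10}$: this is a delicate optimization problem in which one must rigorously compute all low-lying zeros of each Dirichlet $L$-function modulo $k\le 72$ up to a carefully chosen height $T(k)$, and then balance that computation against the analytic tail bound so that the combined relative error stays below the stated threshold across the entire range of $x$. Carrying this out is essentially the main content of \cite{RamareRumely}, and in a self-contained proof I would not expect to improve on either their range $k\le 72$ or their constant without repeating that extensive certified computation. For the purposes of the present paper I would simply cite the result and use it as a black box in the large-$n$ estimates.
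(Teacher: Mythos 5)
The paper offers no proof of this lemma at all---it is quoted verbatim from Ramar\'e and Rumely---and you likewise conclude, correctly, that it should be cited and used as a black box; your sketch of the underlying machinery (reduction to $\psi(x;k,\ell)$, character orthogonality, the truncated explicit formula, numerically verified GRH for the characters modulo each $k\le 72$ up to a height $T(k)$, and an explicit zero-free region for the tail) is an accurate description of what that reference actually carries out. So your treatment is essentially the same as the paper's.
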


\begin{lemma}\label{P(n)} For any integer $n$ with $n\ge 2\times  10^{10}$, we have
$$|P(n)|>0.4845 \frac {n+1}{\log (n+1)}.$$
\end{lemma}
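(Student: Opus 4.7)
The plan is to estimate $|P(n)|=\pi(n+1)-\pi\!\left(\frac{n+1}{2}\right)$ via the explicit bounds in Lemma \ref{Dusart1999}. Since $(n+1)/2\ge 10^{10}>599$, the lemma applies in both directions, and writing $L=\log(n+1)$ and $L'=L-\log 2$ it yields
\begin{equation*}
|P(n)|\ \ge\ \frac{n+1}{L}\!\left(1+\frac{0.992}{L}\right)-\frac{n+1}{2L'}\!\left(1+\frac{1.2762}{L'}\right).
\end{equation*}
After dividing by $(n+1)/L$, the target inequality becomes $h(L)>0.4845$, where $h(L)=1+\frac{0.992}{L}-\frac{L}{2L'}\!\left(1+\frac{1.2762}{L'}\right)$.

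Using the identity $\frac{L}{2L'}=\frac{1}{2}+\frac{\log 2}{2L'}$, one expands
\begin{equation*}
\frac{L}{2L'}\!\left(1+\frac{1.2762}{L'}\right)\ =\ \frac{1}{2}+\frac{0.98465}{L'}+\frac{0.4423}{{L'}^{2}},
\end{equation*}
so the desired bound $h(L)>0.4845$ rewrites as
\begin{equation*}
\frac{0.98465}{L'}+\frac{0.4423}{{L'}^{2}}\ <\ \frac{0.992}{L}+0.0155.
\end{equation*}
Writing $L_0=\log(2\times 10^{10}+1)$ and $L_0'=L_0-\log 2$, the identity
\begin{equation*}
\frac{0.98465}{L'}-\frac{0.992}{L}\ =\ \frac{0.992\log 2-0.00735\,L}{LL'}
\end{equation*}
shows this first cross-term is tiny for $L\ge L_0$: the numerator is at most $0.992\log 2<0.69$ and is nonpositive for $L>94$, while $LL'\ge L_0L_0'>546$, so the quotient is at most about $10^{-3}$. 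Likewise $0.4423/{L'}^{2}\le 0.4423/{L_0'}^{2}<10^{-3}$. Combining, the left-hand side is less than the right-hand side by a margin exceeding $0.013$, which gives the required inequality.

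The main obstacle is that $h$ is \emph{not} monotonic in $L$, so one cannot simply evaluate at the endpoint $L_0$ and conclude. The rearrangement into four individually monotonic summands circumvents this subtlety by reducing the problem to separate comparisons where each side's extremum in $L\ge L_0$ is attained at an endpoint. The comfortable gap between the PNT-driven asymptotic value $1/2$ and the target $0.4845$ is what makes the Dusart error terms small enough to push through.
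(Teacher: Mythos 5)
Your proof is correct and takes essentially the same route as the paper: both write $|P(n)|=\pi(n+1)-\pi((n+1)/2)$ and apply the two-sided bounds of Lemma \ref{Dusart1999}, the only difference being that the paper replaces your exact bookkeeping with $L'=L-\log 2$ by the cruder estimate $\log((n+1)/2)>0.97\log(n+1)$, after which the constant terms compare as $1-0.5155=0.4845$ with a positive $1/\log(n+1)$ remainder and no monotonicity issue ever arises. (One small nit: your displayed ``identity'' rounds the exact coefficient $\tfrac{1.2762}{2}+\tfrac{\log 2}{2}=0.98467\ldots$ down to $0.98465$, which is the unfavorable direction for an upper bound, but the resulting error is of order $10^{-6}$ against your $0.013$ margin, so the conclusion stands.)
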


\begin{proof} By $n\ge 2\times  10^{10}$, we have $\log ((n+1)/2)> 0.97 \log (n+1)$. By Lemma \ref{Dusart1999}, we have
\begin{eqnarray*}\pi ((n+1)/2)&\le& \frac {n+1}{2\log ((n+1)/2)} \left(
1+\frac{1.2762}{\log ((n+1)/2)}\right)\\
&<& \frac {n+1}{2\times 0.97\log (n+1)} \left(
1+\frac{1.2762}{0.97\log (n+1)}\right)\\
&\le & 0.5155 \frac {n+1}{\log (n+1)}+0.6782 \frac {n+1}{(\log
(n+1))^2}.
\end{eqnarray*}
Again, by Lemma \ref{Dusart1999}, we have
\begin{eqnarray*}|P(n)|&=&\pi (n+1)-\pi ((n+1)/2)\\
&\ge& \frac {n+1}{\log (n+1)} \left( 1+\frac{0.992}{\log
(n+1)}\right) -0.5155 \frac {n+1}{\log (n+1)}-0.6782 \frac
{n+1}{(\log
(n+1))^2}\\
&>& 0.4845 \frac {n+1}{\log (n+1)}.
\end{eqnarray*}
\end{proof}

\begin{lemma}\label{lem8} Let $5\le r\le 11$ and $n$ be an integer with $n\ge 2\times  10^{10}$. If $q$  is a prime with  $q\ge r$, then
$$|P(n; q,1)|\le \frac{n+1}{0.8696(r-1) \log (n+1)}<0.3\frac {n+1}{\log (n+1)}.$$
For $q=3$, we have
$$|P(n; 3,1)|< 0.304\frac {n+1}{\log (n+1)}.$$
\end{lemma}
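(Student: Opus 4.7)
The plan is to treat the case $q=3$ and the case $q\ge r\ge 5$ separately, and to split the latter further by the size of $q$ relative to $n$.

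For $q\ge r$ I would combine Montgomery--Vaughan with the trivial count of integers $\equiv 1\pmod q$ in $((n+1)/2,n+1]$. Applying Lemma~\ref{MontgomeryVaughan} with $x=y=(n+1)/2$, $k=q$, $\ell=1$ (valid when $q<(n+1)/2$) gives
$$|P(n;q,1)|\le \frac{n+1}{(q-1)\log\bigl((n+1)/(2q)\bigr)}.$$
Set $g(q)=(q-1)\log((n+1)/(2q))$. Its derivative $\log((n+1)/(2q))-1+1/q$ is positive for $q\le\sqrt{n+1}$, so $g$ is increasing on $[r,\sqrt{n+1}]$; there it suffices to verify
$$g(r)=(r-1)\log\bigl((n+1)/(2r)\bigr)\ge 0.8696(r-1)\log(n+1),$$
which reduces to $(n+1)^{0.1304}\ge 2r$. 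The tightest case $r=11$ requires $n+1\ge 22^{1/0.1304}\approx 1.98\times 10^{10}$, exactly what the hypothesis $n\ge 2\times 10^{10}$ provides. For $q>\sqrt{n+1}$ I would fall back on the trivial bound
$$|P(n;q,1)|\le \frac{n+1}{2q}+1<\frac{\sqrt{n+1}}{2}+1,$$
which is dominated by $(n+1)/(0.8696(r-1)\log(n+1))$ since $\sqrt{n+1}$ is much larger than $\log(n+1)$ in our range. The secondary inequality $1/(0.8696(r-1))<0.3$ is immediate from $r\ge 5$.

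For $q=3$, Montgomery--Vaughan yields only $|P(n;3,1)|\le(n+1)/(2\log((n+1)/6))\approx 0.023\,(n+1)$, which is too weak, so I would invoke the sharper Ramar\'e--Rumely bound (Lemma~\ref{RamareRumely}). With $k=3$ it gives
$$\vartheta(n+1;3,1)-\vartheta\bigl((n+1)/2;3,1\bigr)\le\frac{1.023269}{2}(n+1)-\frac{0.976731}{4}(n+1)=0.267452\,(n+1).$$
Every prime counted in $|P(n;3,1)|$ exceeds $(n+1)/2$, so each $\log p$ appearing in this $\vartheta$--difference exceeds $\log((n+1)/2)$, giving
$$|P(n;3,1)|<\frac{0.267452\,(n+1)}{\log((n+1)/2)}.$$
Using $\log((n+1)/2)>0.97\log(n+1)$ (as computed in Lemma~\ref{P(n)}) then yields $|P(n;3,1)|<(0.267452/0.97)(n+1)/\log(n+1)<0.304\,(n+1)/\log(n+1)$.

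The main delicacy is the numerical calibration in the $q\ge r$ case: the inequality $g(r)\ge 0.8696(r-1)\log(n+1)$ is essentially an equality when $r=11$ and $n=2\times 10^{10}$, which is presumably how the constants $0.8696$ and the exponent $0.1304$ were chosen. Everything else is routine bookkeeping.
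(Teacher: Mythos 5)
Your proposal is correct, and the main case coincides with the paper's argument: for $q\ge r\ge 5$ the paper also applies Lemma~\ref{MontgomeryVaughan} to get $|P(n;q,1)|\le (n+1)/\bigl((q-1)\log((n+1)/(2q))\bigr)$ and then uses the monotonicity of $(x-1)\bigl(\log((n+1)/2)-\log x\bigr)$ together with $\log((n+1)/22)\ge 0.8696\log(n+1)$ — exactly your calibration $(n+1)^{0.1304}\ge 22$. The two differences are in the edge cases. For large $q$ the paper splits at $n+1<6q$ and uses the cruder count $|P(n;q,1)|\le\pi(n+1;q,1)\le 6$, checking separately that $6<(n+1)/(0.8696(r-1)\log(n+1))$; your split at $q>\sqrt{n+1}$ with the bound $\sqrt{n+1}/2+1$ is equally valid and covers the same gap. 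For $q=3$ the routes genuinely diverge: the paper writes $\pi(n+1;3,1)=\pi(n+1)-\pi(n+1;3,2)$ and combines Dusart's upper bound for $\pi(n+1)$ (Lemma~\ref{Dusart1999}) with his lower bounds $\pi(x;3,\ell)>x/(2\log x)$ (Lemma~\ref{Dusart12002}) at both $n+1$ and $(n+1)/2$, arriving at $\bigl(\tfrac14+\tfrac{1.2762}{\log(n+1)}\bigr)\tfrac{n+1}{\log(n+1)}<0.304\tfrac{n+1}{\log(n+1)}$; you instead use the Ramar\'e--Rumely $\vartheta$-estimates (Lemma~\ref{RamareRumely}) and divide by $\log((n+1)/2)$, which is legitimate (the hypothesis $x\ge 10^{10}$ holds at both evaluation points since $n\ge 2\times 10^{10}$) and in fact yields the sharper constant $0.276$. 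Both approaches rely only on lemmas already quoted in the paper, so your proof is a valid, slightly stronger alternative for the $q=3$ part.
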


\begin{proof}
 By Lemmas
\ref{Dusart1999} and \ref{Dusart12002}, we have
\begin{eqnarray*}\pi (n+1; 3,1)&=&\pi (n+1)-\pi (n+1; 3,2)\\
&\le& \frac {n+1}{\log (n+1)} \left( 1+\frac{1.2762}{\log
(n+1)}\right)-\frac {n+1}{2\log (n+1)}\\
&=&\frac {n+1}{\log (n+1)} \left( \frac 12+\frac{1.2762}{\log
(n+1)}\right)\end{eqnarray*} and
$$\pi ((n+1)/2;3,1)>\frac{n+1}{4\log ((n+1)/2)}>\frac{n+1}{4\log
(n+1)}.$$ Thus, by $n\ge 2\times  10^{10}$, we have
\begin{eqnarray*}|P(n; 3,1)|
&=&\pi (n+1; 3,1)-\pi ((n+1)/2; 3,1)\\
&<&\frac {n+1}{\log (n+1)} \left( \frac 14+\frac{1.2762}{\log
(n+1)}\right)\\
&<&0.304\frac {n+1}{\log (n+1)}.\end{eqnarray*}

Now we assume that $q\ge 5$.

Since $n\ge 2\times  10^{10}$ and $5\le r\le 11$, it follows that
\begin{equation}\label{ab}\log ((n+1)/(2r))\ge \log ((n+1)/22)\ge
0.8696 \log (n+1)\end{equation} and
\begin{equation}\label{a}0.3\frac {n+1}{\log (n+1)}>\frac{n+1}{0.8696(r-1) \log (n+1)}>6.\end{equation}

{\bf Case 1:} $n+1\ge 6q$.  By Lemma \ref{MontgomeryVaughan}, we
have
$$|P(n; q,1)|=\pi (n+1; q,1)-\pi ((n+1)/2; q, 1) \le \frac{n+1}{(q-1) \log ((n+1)/(2q))}.$$
Since the function $f(x)=(x-1)(\log ((n+1)/2)-\log x)$ is increasing
on $x\in [5, (n+1)/6]$, it follows from \eqref{ab} that
\begin{eqnarray*}|P(n; q,1)| &\le& \frac{n+1}{(q-1) \log ((n+1)/(2q))}\\
&\le& \frac{n+1}{(r-1) \log ((n+1)/(2r))}\\
&<&\frac{n+1}{0.8696 (r-1) \log (n+1)}.\end{eqnarray*}

{\bf Case 2:} $n+1< 6q$. Then, by \eqref{a}, we have
$$|P(n; q,1)| \le 6<\frac{n+1}{0.8696 (r-1) \log (n+1)}.$$
\end{proof}

\section{Proof of Theorem \ref{thm1} for $n\ge 2\times 10^{10}$}

By \cite{ChenGongRen}, we may assume that $\ell$ has exact two
distinct prime factors. Let $\ell =q_1^{\alpha_1} q_2^{\alpha_2}$,
where $q_1$ and $q_2$ are two primes with $q_1>q_2\ge 3$, and
$\alpha_1$ and $\alpha_2$ are two positive integers. In this
section, we always assume that $n\ge 2\times 10^{10}$.

{\bf Case 1:} $q_1q_2\le 72$. By Lemma \ref{RamareRumely}, we have
\begin{eqnarray*}&&\left| \vartheta (n+1; q_1q_2, 2 )-\vartheta \left( \frac12 (n+1);
q_1q_2, 2 \right) -\frac{n+1}{2\varphi(q_1q_2)}\right|\\
&\le &\left| \vartheta (n+1; q_1q_2, 2 ) -
\frac{n+1}{\varphi(q_1q_2)}\right| +\left| \vartheta \left( \frac
12 (n+1);
q_1q_2, 2 \right) - \frac{n+1}{2\varphi(q_1q_2)}\right|\\
&\le & 0.023269 \frac{n+1}{\varphi(q_1q_2)} + 0.023269
\frac{n+1}{2\varphi(q_1q_2)}\\
&<&\frac{n+1}{2\varphi(q_1q_2)}.\end{eqnarray*} Hence
$$\vartheta (n+1; q_1q_2, 2 )-\vartheta \left(\frac 12(n+1);
q_1q_2, 2 \right)>0.$$ Thus, there exists $p\in P(n)$ such that
$p\equiv 2\pmod{q_1q_2}$. Then $(p(p-1), \ell)=1$. By Lemma
\ref{lem2}, $\Omega_\ell(n)$ is not a powerful number.

{\bf Case 2:} $q_1q_2>72$. Then $q_1\ge 11$ and $q_2\ge 3$. By Lemma
\ref{lem8}, we have
$$|P(n;q_1,1)|\le \frac{n+1}{0.8696(11-1) \log (n+1)}<0.115\frac{n+1}{\log (n+1)}$$
and
$$|P(n;q_2,1)|< 0.304\frac {n+1}{\log (n+1)}.$$
Thus, by $n\ge 2\times 10^{10}$ and Lemma \ref{P(n)}, we have
$$|P(n;q_1,1)|+|P(n;q_2,1)|+2<0.419\frac {n+1}{\log (n+1)}+2<0.4845\frac {n+1}{\log
(n+1)}< |P(n)|.$$ Hence, there exists $p\in P(n)$ such that
$$p\notin \{ q_1, q_2\}, \quad p\not\equiv 1\pmod{q_1}, \quad p\not\equiv
1\pmod{q_2}.$$ For this prime $p$, we have $p\in P(n)$ and
$(p(p-1), \ell)=1$. By Lemma \ref{lem2}, $\Omega_\ell(n)$ is not a
powerful number.

\section{Proof of Theorem \ref{thm1} for $n< 2\times 10^{10}$}

Let \begin{eqnarray*}&&\{ (p_{i,1}, p_{i,2}, p_{i,3})\}_{i=1}^{7} \\
&=&\{ (17,19,23), (23,29,31), (29, 37, 41), (43, 47, 53),\\
&& (71, 73, 83), (131, 137, 139), (239, 251, 257)\} .\end{eqnarray*}
Then $p_{i,j} (1\le i\le 7, 1\le j\le 3)$ are primes such that
$p_{i,1}<p_{i,2}<p_{i,3}$, $p_{i+1,3}<2p_{i,1}$ and for each $i$,
$p_{i,1}-1, p_{i,2}-1, p_{i,3}-1$ have no common odd prime
 factors. Since
$\ell$ has exact two distinct prime factors, it follows from
 Lemma \ref{lem6} that for all integers $n$ with $22=p_{1,3}-1\le
 n\le 2p_{7,1}-2=476$, $\Omega_\ell(n)$ is not a powerful number.

Let \begin{eqnarray*}&&\{ (p_{i,1}, p_{i,2}, p_{i,3})\}_{i=8}^{34} \\
&=&\{ (263, 347, 359), (467, 479, 503),(839, 863, 887), (1487, 1523, 1619), \\
&& (2819, 2879, 2903),(5387, 5399, 5483), (10103, 10163, 10343),\\
&& (19583, 20123, 20183), (38747, 38783, 38867),  (77003, 77279, 77339), \\
&& (153563, 153743, 153887), (306563, 306707, 306899), \\
&& (611543, 611999, 612023),(1221659, 1222667, 1222967),  \\
&& (2442599, 2442767, 2443283), (4884227, 4884707, 4884779),\\
&&  (9767987, 9767999, 9768119), (19532699, 19533587, 19534583), \\
&& (39063719, 39063923, 39064643), (78126383, 78126563,
78126827),  \\
&& (156251483, 156251699, 156252623),(312501479, 312501587,
312502439),  \\
&& (625002419, 625002479, 625002527), (1250001167, 1250002847,
1250003243), \\
&& (2500001327, 2500001447, 2500001807), \\
&& (5000001299,
5000001863, 5000001983), \\
&&(10000000259, 10000000643, 10000002263 ) \}. \\
\end{eqnarray*}
Then $p_{i,j} (8\le i\le 34, 1\le j\le 3)$ are primes such that
$p_{i,1}<p_{i,2}<p_{i,3}$, $p_{i+1,3}<2p_{i,1}$ and for each $i$,
$(p_{i,1}-1)/2, (p_{i,2}-1)/2, (p_{i,3}-1)/2$ are distinct primes.
Since $\ell$ has exact two distinct prime factors, it follows from
 Lemma \ref{lem6} that for all integers $n$ with $358=p_{8,3}-1\le
 n\le 2\times 10^{10}<
 2p_{34,1}-2=20000000516$, $\Omega_\ell(n)$ is not a powerful number.

It is clear that $\Omega_\ell(n)$ is not a powerful number for
$n\in \{ 1, 2\}$. Finally, we consider $3\le n\le 21$. Suppose
that $\Omega_\ell(n)$ is a powerful number.

{\bf Case 1:} $16\le n\le 21$. Then $13,17\in P(n)$. By Lemma
\ref{lem2}, we have $17\mid \ell$ and $(3\times 13, \ell)>1$. So
$\ell =3^\alpha 17^\beta$ or $13^\alpha 17^\beta$. By calculation,
we have $953\| 4^{17}+1$ and $953\nmid b^{17}+1$ for all $1\le
b\le 21$ with $b\not= 4$. Since $(\ell, 953-1)=17$, it follows
from Lemma \ref{lem7} that $953\mid \ell$, a contradiction.

{\bf Case 2:} $10\le n\le 15$. Then $11\in P(n)$. By Lemma
\ref{lem2}, we have $(5\times 11, \ell)>1$.

If $5\mid \ell$, then, noting that $41\|  4^5+1$, $(41-1,\ell )=5$
and $41\nmid b^5+1$ for all $1\le b\le 15$ with $b\not= 4$, by
Lemma \ref{lem7} we have $41\mid \ell$. Since $\ell$ has exact two
distinct prime factors, it follows that $\ell =5^\alpha 41^\beta$.
Thus $(61-1, \ell)=5$. Noting that $61\|  3^5+1$, $61\nmid b^5+1$
for all $1\le b\le 15$ with $b\not= 3$, by Lemma \ref{lem7} we
have $61\mid \ell$, a contradiction. Hence $5\nmid \ell$ and
$11\mid \ell$. Noting that $661\|  3^{11}+1$, $(661-1,\ell)\in \{
11, 3\times 11\}$, $661\nmid b^{11}+1$ and $661\nmid b^{33}+1$ for
all $1\le b\le 15$ with $b\not= 3$, by Lemma \ref{lem7} we have
$661\mid \ell$. Thus $\ell =11^\alpha 661^\beta$. Then $(397-1,
\ell)=11$. Noting that $397\| 4^{11}+1$ and $397\nmid b^{11}+1$
for all $1\le b\le 15$ with $b\not= 4$, by Lemma \ref{lem7} we
have $397\mid \ell$, a contradiction.

{\bf Case 3:} $6\le n\le 9$. Then $7\in P(n)$.  By Lemma
\ref{lem2}, we have $(3\times 7, \ell)>1$.

If $3\mid \ell$, then, noting that $13\| 4^3+1$, $(13-1, \ell)=3$
and $13\nmid b^3+1$ for all $1\le b\le 9$ with $b\not= 4$, by
Lemma \ref{lem7} we have $13\mid \ell$. Thus $\ell =3^\alpha
13^\beta$. Noting that $31\| 6^{3}+1$, $(31-1,\ell)=3$, and
$31\nmid b^{11}+1$ for all $1\le b\le 9$ with $b\not= 6$, by Lemma
\ref{lem7} we have $31\mid \ell$, a contradiction. Hence $3\nmid
\ell$ and $7\mid \ell$. Noting that $449\| 5^{7}+1$,
$(449-1,\ell)=7$, and $449\nmid b^{7}+1$ for all $1\le b\le 9$
with $b\not= 5$, by Lemma \ref{lem7} we have $449\mid \ell$. Thus
$\ell =7^\alpha 449^\beta$. Then $(547-1, \ell)=7$.  Noting that
$547\| 3^{7}+1$ and $547\nmid b^{7}+1$ for all $1\le b\le 9$ with
$b\not= 3$, by Lemma \ref{lem7} we have $547\mid \ell$, a
contradiction.

{\bf Case 4:} $4\le n\le 5$. Then $5\in P(n)$.  By Lemma
\ref{lem2}, we have $5\mid \ell$. By calculation, we have $11\|
2^5+1$, $41\| 4^{5}+1$, $11\nmid b^5+1$ for all $1\le b\le 5$ with
$b\not= 2$, and $41\nmid c^{5}+1$ for all $1\le c\le 5$ with
$c\not= 4$. Since $(\ell, 11-1)=5$ and $(\ell, 41-1)=5$, it
follows from Lemma \ref{lem7} that $11\times 41\mid \ell$, a
contradiction.

{\bf Case 5:} $n=3$. Since $3\mid 2^\ell +1$, $3\nmid 3^\ell+1$
and $\Omega_\ell(n)$ is a powerful number, we see that $3^2\mid
2^\ell+1$. So $3\mid \ell$. It is clear that $(7-1, \ell)=3$. By
$7\nmid 2^3+1$, $7\| 3^3+1$ and Lemma \ref{lem7}, we have $7\mid
\ell$. Thus $\ell=3^\alpha 7^\beta$. Then $(\ell, 547-1)=21$.
Noting that $547\| 3^{7}+1$ and $547\nmid 2^{21}+1$, by Lemma
\ref{lem7} we have $547\mid \ell$, a contradiction.

\section{Proof of Theorem \ref{thm2}}

To prove Theorem \ref{thm2}, we first give several lemmas. In this
section, the bounds are not best possible.

\begin{lemma}\label{lema} Let $q$ be an odd prime with $q\ge 16C+1\ge 3$.
If $n\ge 4(16C+1)^2$, then
$$|P(n; q, 1)|\le \frac{n+1}{8C\log (n+1)}+6.$$   \end{lemma}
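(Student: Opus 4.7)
The plan is to split the range $q \ge 16C+1$ at the threshold $q_0 := (n+1)/(2e)$: for $16C+1 \le q \le q_0$ I would invoke Lemma \ref{MontgomeryVaughan}, and for $q > q_0$ a direct counting argument suffices. The hypothesis $n \ge 4(16C+1)^2$ easily yields $q_0 > 16C+1$, so both subintervals are non-empty.

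In the first range, applying Lemma \ref{MontgomeryVaughan} with $x=y=(n+1)/2$, $k=q$, $\ell=1$ (valid since $q \le q_0 < (n+1)/2$) gives
\[
|P(n;q,1)| \le \frac{n+1}{(q-1)\log((n+1)/(2q))}.
\]
To conclude I want $g(q) := (q-1)\log((n+1)/(2q)) \ge 8C\log(n+1)$. A short computation gives $g'(q) = \log((n+1)/(2q)) - 1 + 1/q$, which is positive for $q \le q_0$ (where $\log((n+1)/(2q)) \ge 1$). Hence $g$ is increasing on $[16C+1, q_0]$ and it suffices to check $g(16C+1) = 16C\log((n+1)/(2(16C+1))) \ge 8C\log(n+1)$. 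This inequality is equivalent to $(n+1)/(2(16C+1)) \ge \sqrt{n+1}$, i.e.\ $n+1 \ge 4(16C+1)^2$, which is precisely the hypothesis. Therefore $|P(n;q,1)| \le (n+1)/(8C\log(n+1))$ in this range.

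In the second range $q > q_0$, every prime $p \in P(n;q,1)$ has the form $p = 1 + jq$ with $(n-1)/(2q) < j \le n/q$, and the number of such integers $j$ is at most $(n+1)/(2q) + 1 < e + 1 < 4$. Thus $|P(n;q,1)| \le 3 \le 6$ here, and combining the two ranges yields the asserted bound.

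The main technical content is the monotonicity of $g$ on $[16C+1, q_0]$ together with the sharp matching of constants: the thresholds $16C+1$ and $4(16C+1)^2$ are calibrated so that $g(16C+1) = 8C\log(n+1)$ exactly at the boundary $n+1 = 4(16C+1)^2$. This is the one step requiring care; the rest of the argument reduces to direct computation and the fact that the two cases cleanly cover $[16C+1,\infty)$.
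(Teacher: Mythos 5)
Your proof is correct and follows essentially the same route as the paper: a case split between $q$ small enough to apply Lemma \ref{MontgomeryVaughan} and $q$ large enough that a trivial count of integers $j$ with $p=1+jq$ suffices, combined with the monotonicity of $(q-1)\log((n+1)/(2q))$ and the observation that the hypothesis $n\ge 4(16C+1)^2$ is exactly what makes $16C\log((n+1)/(2(16C+1)))\ge 8C\log(n+1)$. The only (immaterial) differences are your cut point $(n+1)/(2e)$ versus the paper's $n+1\ge 6q$, and your sharper bound of $3$ rather than $6$ in the large-$q$ case.
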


\begin{proof} If $n+1\ge 6q$, then, by Lemma \ref{MontgomeryVaughan},
we have $$|P(n; q, 1)|=\pi (n+1; q,1)-\pi (\frac12 (n+1); q,1)\le
\frac{n+1}{(q-1)\log ((n+1)/(2q))}.$$ Since the function
$f(x)=(x-1)(\log ((n+1)/2)-\log x)$ is increasing on $x\in [3,
(n+1)/6]$, it follows from $q\ge 16C+1$ and $n\ge 4(16C+1)^2$ that
\begin{eqnarray*}&&|P(n; q, 1)|\le \frac{n+1}{(q-1)\log ((n+1)/(2q))}\\
&\le& \frac{n+1}{16C\log ((n+1)/(2(16C+1)))}\le \frac{n+1}{8C\log
(n+1)}.\end{eqnarray*} If $n+1< 6q$, then $|P(n; q, 1)|\le \pi (n+1;
q,1)\le 6$.
\end{proof}

\begin{lemma}(Dirichlet's Theorem (see \cite{Davenport})\label{lemc} We have
$$\pi (x; k,\ell) \sim \frac{x}{\varphi (k)\log x}$$. \end{lemma}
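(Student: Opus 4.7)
The statement is the prime number theorem in arithmetic progressions, which goes well beyond Dirichlet's original infinitude result. My plan follows the classical analytic route, as developed in Davenport.

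The first step is to use orthogonality of Dirichlet characters mod $k$ to isolate the residue class $\ell \pmod{k}$. Specifically, for $(\ell,k)=1$,
\[
\psi(x; k,\ell) := \sum_{\substack{n\le x\\ n\equiv \ell\,(\mathrm{mod}\,k)}}\Lambda(n) = \frac{1}{\varphi(k)}\sum_{\chi\,(\mathrm{mod}\,k)} \overline{\chi(\ell)}\,\psi(x,\chi),
\]
where $\psi(x,\chi)=\sum_{n\le x}\chi(n)\Lambda(n)$. The principal-character term contributes $x/\varphi(k)$ (up to $O(\log x)$), and the problem is reduced to proving $\psi(x,\chi)=o(x)$ for every non-principal character $\chi$.

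Next I introduce the Dirichlet $L$-functions $L(s,\chi)=\sum_{n\ge 1}\chi(n)n^{-s}$, which have Euler products for $\mathrm{Re}(s)>1$ and, for non-principal $\chi$, extend analytically to $\mathrm{Re}(s)>0$. The heart of the proof is to show that $L(s,\chi)\ne 0$ on the line $\mathrm{Re}(s)=1$. Non-vanishing at $s=1$ for complex non-principal $\chi$ follows from the classical $3$-$4$-$1$ argument applied to $\zeta(s)^3 L(s,\chi)^4 L(s,\chi^2)$ combined with the behavior of $\sum \chi(n)\Lambda(n)n^{-s}$ as $s\to 1^+$. The delicate case is a real non-principal character $\chi$, where one must rule out a real zero $L(1,\chi)=0$; Davenport presents Mertens'/de la Vall\'ee Poussin-style arguments, or the Dirichlet class number formula showing $L(1,\chi)>0$.

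With the zero-free line in hand, Perron's formula applied to $-L'(s,\chi)/L(s,\chi)$ and a contour shifted just past $\mathrm{Re}(s)=1$ yields $\psi(x,\chi)=o(x)$ for each non-principal $\chi$, hence
\[
\psi(x;k,\ell)=\frac{x}{\varphi(k)}+o(x).
\]
Partial summation (or passage from $\psi$ to $\vartheta$ to $\pi$, estimating the prime-power contributions by $O(\sqrt{x}\log x)$) then converts this into $\pi(x;k,\ell)\sim x/(\varphi(k)\log x)$, which is the asserted asymptotic.

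The main obstacle is the non-vanishing of $L(s,\chi)$ on $\mathrm{Re}(s)=1$, and within that the genuinely hard case of real non-principal $\chi$ at $s=1$. Everything else (character orthogonality, Perron's formula, partial summation to pass from $\psi$ to $\pi$) is mechanical once that step is in place. Since the lemma is quoted directly from Davenport, I would simply cite that reference rather than reproduce this machinery here.
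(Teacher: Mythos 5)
The paper offers no proof of this lemma; it is quoted directly from Davenport, and your sketch (orthogonality of characters, non-vanishing of $L(s,\chi)$ on $\mathrm{Re}(s)=1$, Perron's formula, partial summation) is precisely the standard argument found there, after which you rightly conclude that a citation suffices. Your observation that the stated asymptotic is really the prime number theorem for arithmetic progressions rather than Dirichlet's original infinitude result is correct, but this does not affect the paper, which only uses the asymptotic as quoted.
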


\begin{lemma}\label{lemd} For any odd integer $k\ge 3$, there exists
an integer $M(k)$ such that, if $n$ is an integer with $n\ge
M(k)$, then
$$|P(n; k,2)|\ge \frac{n+1}{4\varphi (k) \log (n+1)}.$$\end{lemma}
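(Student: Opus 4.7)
The plan is to apply Dirichlet's theorem (Lemma~\ref{lemc}) with modulus $k$ and residue class $2$; this is legitimate because $k$ is odd, so $\gcd(k,2)=1$. Since
\[|P(n;k,2)| = \pi(n+1;k,2) - \pi((n+1)/2;\,k,2),\]
the claim reduces to estimating these two counts asymptotically and verifying that the difference is at least the asserted quantity once $n$ is large.

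By Lemma~\ref{lemc}, for any fixed $\varepsilon>0$ there is a threshold $x_0=x_0(k,\varepsilon)$ such that
\[(1-\varepsilon)\frac{x}{\varphi(k)\log x} \le \pi(x;k,2) \le (1+\varepsilon)\frac{x}{\varphi(k)\log x}\]
whenever $x\ge x_0$. Applying the lower bound at $x=n+1$ and the upper bound at $x=(n+1)/2$ (both allowed once $n$ is large in terms of $k$ and $\varepsilon$) yields
\[|P(n;k,2)| \ge \frac{n+1}{\varphi(k)\log(n+1)}\Bigl[(1-\varepsilon) - \frac{(1+\varepsilon)\log(n+1)}{2\log((n+1)/2)}\Bigr].\]

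As $n\to\infty$, one has $\log(n+1)/\log((n+1)/2) = 1 + \log 2/\log((n+1)/2) \to 1$, so the bracketed expression tends to $(1-\varepsilon) - (1+\varepsilon)/2 = 1/2 - 3\varepsilon/2$. Choosing, say, $\varepsilon = 1/12$, and then taking $n$ large enough so that the bracket exceeds $1/4$, we obtain $|P(n;k,2)| \ge (n+1)/(4\varphi(k)\log(n+1))$. The resulting threshold serves as $M(k)$.

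The only conceptual caveat is that Lemma~\ref{lemc} is the qualitative form of Dirichlet's theorem and supplies no effective value of $M(k)$, but since the lemma only asserts the existence of $M(k)$, this is immaterial. In contrast to Lemma~\ref{lema}, no sieve bound or Montgomery--Vaughan-type tail estimate is needed here; once the asymptotic is in place, the argument is a brief calculation isolating the dominant term $\tfrac{1}{2}\cdot\tfrac{n+1}{\varphi(k)\log(n+1)}$ and discarding at most half of it to absorb both the $\varepsilon$-error from Dirichlet and the $\log 2$-loss at the lower endpoint.
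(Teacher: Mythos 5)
Your proposal is correct and follows exactly the route the paper takes: the paper simply states that the lemma follows from Dirichlet's theorem (Lemma~\ref{lemc}) together with the identity $|P(n;k,2)|=\pi(n+1;k,2)-\pi((n+1)/2;k,2)$, and your argument just supplies the straightforward $\varepsilon$-bookkeeping that the authors leave implicit. The numerical check is sound: the bracket tends to $1/2-3\varepsilon/2$, which comfortably exceeds $1/4$ for small $\varepsilon$.
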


Lemma \ref{lemd} follows from Lemma \ref{lemc} and $$|P(n;
k,2)|=\pi (n+1; k,2)-\pi ((n+1)/2; k,2).$$

\begin{proof}[Proof of Theorem \ref{thm2}] It is clear that we
may assume that $\ell$ has exact $r$ distinct prime factors.  Let
$\ell =q_1^{\alpha_1} \cdots q_r^{\alpha_r}$, where $q_i (1\le
i\le r)$ are primes with $3\le q_1<\cdots <q_r$ and $\alpha_i
(1\le i\le r)$ are positive integers. Let
$$D_1=16r+1, \quad D_{i+1}=16r\cdot D_i!!+1,\quad i=1,2,\dots ,
r-1,$$ where $D_i!!=\prod_{0\le k\le (D_i-1)/2} (2k+1) (1\le i\le
r)$.

If $q_1\ge D_1$, then, by Lemmas \ref{lema} and \ref{P(n)}, we have
$$\sum_{i=1}^r |P(n; q_i,1)|+r\le  \frac{n+1}{8\log (n+1)}+7r<0.4845 \frac {n+1}{\log
(n+1)}<|P(n)|$$ for $n\ge N_{1,r}$. Thus, there exists a prime $p\in
P(n)$ such that $p\notin \{ q_1, \dots , q_r\}$ and $p\not\equiv
1\pmod{q_i} (1\le i\le r)$. For this prime $p$, we have $p\in P(n)$
and $(p(p-1), \ell)=1$. By Lemma \ref{lem2}, $\Omega_\ell (n)$ is
not a powerful number.

Now we assume that $q_1<D_1$. Suppose that $q_i<D_i$ for $1\le
i<j\le r$ and $q_j\ge D_j$. By Lemmas \ref{lema} and \ref{lemd}, we
have \begin{eqnarray*}&&\sum_{i=j}^r |P(n; q_i,1)|+r\le
\frac{n+1}{8\cdot D_{j-1}!!\log (n+1)}+7r \\
&<&\frac{n+1}{4\varphi (D_{j-1}!!) \log (n+1)}\le |P(n; D_{j-1}!!,
2)|\end{eqnarray*} for $n\ge N_{j,r}$. Thus, there exists a prime
$p\in P(n; D_{j-1}!!, 2)$ such that $p\notin \{ q_1, \dots ,
q_r\}$ and $p\not\equiv 1\pmod{q_i} (j\le i\le r)$. By $q_1\cdots
q_{j-1}\mid D_{j-1}!!$, we have $p\equiv 2\pmod{q_1\cdots
q_{j-1}}$. Hence, for this prime $p$,  we have $p\in P(n)$ and
$(p(p-1), \ell)=1$. By Lemma \ref{lem2}, $\Omega_\ell (n)$ is not
a powerful number.

Finally, suppose that $q_i<D_i$ for $1\le i\le r$. By Lemma
\ref{lemd}, we have
$$|P(n; D_r!!,2)|\ge \frac{n+1}{4\varphi (D_r!!) \log (n+1)}>1$$
for $n\ge N_{r+1,r}$. Thus, there exists a prime $p\in P(n; D_{r}!!,
2)$. Since $q_1\cdots q_r
 \mid D_r!!$, it follows that $p\equiv 2\pmod{q_1\cdots q_r}$. So
 $(p(p-1), \ell)=1$. By Lemma \ref{lem2}, $\Omega_\ell (n)$ is not a powerful
number.

Let $T_r=\max \{ N_{1,r},\dots , N_{r+1, r}\} $. We have proved that
if $n\ge T_r$, then $\Omega_\ell (n)$ is not a powerful number. This
completes the proof of Theorem \ref{thm2}.
\end{proof}

\end{document}